\newtheorem{thm}{Theorem}
\newtheorem{lemma}[thm]{Lemma}
\newtheorem{prop}[thm]{Proposition}
\newtheorem{mydef}[thm]{Definition}
\numberwithin{equation}{thm}
\numberwithin{thm}{section}
\newcommand{\fin}{\mbox{\it fin}}
\newcommand{\con}{\mathfrak c}
\newcommand{\be}{\mathfrak b}
\newcommand{\eps}{\varepsilon}
\newcommand{\alga}{\mathfrak A}
\newcommand{\algb}{\mathfrak B}
\newcommand{\cE}{{\mathcal E}}
\newcommand{\cJ}{{\mathcal J}}
\newcommand{\cS}{\protect{\mathcal S}}
\newcommand{\btu}{\bigtriangleup}
\newcommand{\vf}{\varphi}
\newcommand{\stevo}{Todor\v{c}evi\'c}
\newcommand{\sm}{\setminus}
\newcommand{\sub}{\subseteq}
\newcommand{\clop}{\protect{\rm Clop} }
\newcommand{\cov}{{\mbox{cov}}}
\newcommand{\cof}{{\mbox{cof}}}
\begin{document}
\title{Nonseparable growth of the integers\\
supporting a measure}

\author[P.\  Drygier]{Piotr Drygier}
\address{Instytut Matematyczny, Uniwersytet Wroc\l awski}
\email{piotr.drygier@math.uni.wroc.pl}

\author[G.\ Plebanek]{Grzegorz Plebanek}
\address{Instytut Matematyczny, Uniwersytet Wroc\l awski}
\email{grzes@math.uni.wroc.pl}


\thanks{G.\  Plebanek was partially supported by NCN grant 2013/11/B/ST1/03596 (2014-2017).}

\begin{abstract}
Assuming $\be=\con$ (or some weaker statement), we construct a compactification $\gamma\omega$
  of $\omega$ such that its remainder $\gamma\omega\setminus\omega$ is nonseparable and carries
  a strictly positive measure.
\end{abstract}

\maketitle

\noindent{\tiny {\bf AMS subject classification: }54D35, 54D65}
\section{Introduction}

We consider here compactifications $\gamma\omega$ of the discrete space $\omega$.
Given a compact space $K$, we say that $K$ is a growth of $\omega$ if there is a compactification
  $\gamma\omega$ with the remainder $\gamma\omega\sm\omega$ homeomorphic to the space $K$.
It is well-known that every separable compactum is a growth of  $\omega$.

By a well-known theorem due to Parovi\u cenko  \cite{Pa63}, under the continuum hypothesis
  every compact space $K$ of topological weight $\le\con$ is an continuous image of the remainder
  $\beta\omega\sm\omega$ of the \v{C}ech-Stone compactification of $\omega$ and, consequently,
  $K$ is homeomorphic to the remainder $\gamma\omega\sm\omega$ of some compactification
  $\gamma\omega$.

Let $S$ be the Stone space of the measure algebra which may be seen as the quotient of
  $Bor(2^\omega)$ modulo the ideal of null sets.
Then $S$ is a nonseparable compact space that carries a strictly positive (regular probability
  Borel) measure, i.e.\ a measure that is positive on every nonempty open subset of $S$.
Since the topological weight of $S$ is $\con$, CH implies  that  $S$ is a growth of $\omega$.
In fact, this may be done in such a way  that the canonical measure on $S$ is defined by the
  asymptotic density defined for subsets of $\omega$, see Frankiewicz and Gutek \cite{FG81}.

Dow and Hart \cite{DH00}  proved  that the space $S$ is not a growth of $\omega$ if
  one assumes the Open Coloring Axiom (OCA).
Therefore it seems to be interesting to investigate whether one can always, in the usual set theory,
  construct a compactification $\gamma\omega$ such that the remainder $\gamma\omega\sm\omega$
  is nonseparable but carries a strictly positive measure.

If a compact space carries a strictly positive measure then it satisfies $ccc$;
  the converse does not hold which was already demonstrated by Gaifman \cite{Ga64}.
Later  Bell \cite{Be80}, van Mill \cite{Mi82}, \stevo\ \cite{To92} constructed several interesting
  examples of compactifications of $\omega$ having  nonseparable $ccc$ remainders; cf.\
 \stevo\  \cite{To00}.
It seems that the structure of all those examples exclude the possibility that $ccc$ could
  be strenghten to saying that the remainder in question supports  a measure,
  see e.g.\ Lemma 3.2 in D\v{z}amonja and Plebanek \cite{DP08}.

Before we state our main result we need to fix some notation and terminology concerning
  the set-theoretic assumption that we use.
We denote by $\lambda$ the usual product measure on the Cantor set $2^\omega$.
Let $\cE$ be an ideal of subsets $A$ of $2^\omega$ for which $\lambda(\overline{A})=0$.
Recall that the covering number $\cov(\cE)$ is the least cardinality of a covering of
  $2^\omega$ by sets from $\cE$; cf.\ Bartoszy\'nski and Shelah \cite{BS92} for cardinal
  invariants of the ideal $\cE$.
We shall sometimes write $\kappa_0=\cov(\cE)$ for simplicity.
As usual, $[\kappa_0]^{\le\omega}$ denotes the family of all countable subsets of $\kappa_0$.
Recall that $\cof [\kappa_0]^{\le\omega}$,  the cofinality of this partially ordered set,
  is the least size of a family $\cJ\sub [\kappa_0]^{\le \omega}$ such that every countable
  subset of $\kappa_0$ is contained in some $J\in\cJ$.
Our set-theoretic assumption (*) involves also $\be$, the familiar bounding number and
  reads as follows.
 \[ (*)\quad \mbox{writing }\kappa_0=\cov(\cE) \mbox{ we have } \cof [\kappa_0]^{\le\omega}\le\be.\]

\begin{thm}\label{main}
Assuming  (*) there is a compactification $\gamma\omega$ of the set of natural numbers such that its
  remainder $\gamma\omega\setminus\omega$ is not separable but carries a strictly positive
  regular probability Borel measure.
\end{thm}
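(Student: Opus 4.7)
The plan is to realize $\gamma\omega$ as the Stone space of a Boolean subalgebra $\alga\sub P(\omega)$ containing $\Fin$; then $\gamma\omega\sm\omega$ is the Stone space of $\alga/\Fin$, a strictly positive regular Borel probability on the remainder corresponds by Stone duality to a strictly positive finitely additive probability on $\alga/\Fin$, and nonseparability of the remainder is guaranteed as soon as $\alga/\Fin$ has uncountable cellularity. So the task reduces to producing an $\alga$ whose quotient $\alga/\Fin$ has uncountable cellularity and admits such a measure.

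The natural template is the measure algebra $\algb=\algb(2^{\kappa_0})$ of the product measure $\mu$ on $2^{\kappa_0}$, which is nonseparable of cellularity $\kappa_0>\omega$ and carries the canonical strictly positive $\mu$. I would build a measure-preserving Boolean embedding $\Phi:\algb\to P(\omega)/\Fin$: for the coordinate generators $(e_\xi)_{\xi<\kappa_0}$ of $\algb$, pick representatives $A_\xi\sub\omega$ such that every nontrivial Boolean combination of the $A_\xi$'s is infinite with asymptotic density equal to $\mu$ of the corresponding combination of the $e_\xi$'s. Taking $\alga$ to be the algebra generated by $\Fin\cup\{A_\xi:\xi<\kappa_0\}$ identifies $\alga/\Fin$ with $\algb$ via $\Phi$, and the push-forward density becomes the desired strictly positive measure.

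The construction is a transfinite recursion of length $\kappa_0$. Identifying $P(\omega)$ with $2^\omega$ via characteristic functions, each single density requirement for a Boolean combination involving a new $A_\xi$ is violated exactly on a closed $\lambda$-nullset in $2^\omega$, hence on a set in $\cE$. At stage $\xi$ the previously fixed $\{A_\eta:\eta<\xi\}$ impose fewer than $\kappa_0$ such constraints; since $\kappa_0=\cov(\cE)$, the union of the corresponding $\cE$-sets does not cover $2^\omega$, so a valid $A_\xi$ exists. This explains why $\cE$ rather than the null ideal is the correct invariant: the obstruction to a prescribed density really is closed-null, not merely null.

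The second part of (*), $\cof[\kappa_0]^{\le\omega}\le\be$, handles the global coherence of the densities. Since asymptotic density is not literally a function on $P(\omega)$, for every countable $J\sub\kappa_0$ one has to certify that the partial densities of all atoms of the countable subalgebra generated by $\{A_\eta:\eta\in J\}$ really converge to the prescribed $\mu$-values. Fixing a cofinal family $\cJ=\{J_\beta:\beta<\be\}$ in $[\kappa_0]^{\le\omega}$ reduces this to $\be$ rate-of-convergence functions $f_\beta\in\omega^\omega$, which by the definition of $\be$ can be dominated simultaneously by a single $g\in\omega^\omega$; the $A_\xi$'s are then chosen along the common scale given by $g$. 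The main obstacle I anticipate is meshing the two layers inside a single recursion: at each stage $\xi$ the new $A_\xi$ must avoid fewer than $\kappa_0$ many $\cE$-sets locally while simultaneously conforming to the bounding scale $g$ globally. Once this coherence is achieved, $\Phi$ is a measure-preserving embedding of $\algb$ into $\alga/\Fin$, the transferred density extends by Stone duality to a strictly positive regular Borel probability on $\gamma\omega\sm\omega$, and the uncountable cellularity of $\algb$ yields nonseparability of the remainder.
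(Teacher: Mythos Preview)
The approach has a fatal gap. The algebra $\alga$ you generate over $\Fin$ by the $A_\xi$'s yields $\alga/\Fin$ generated by the classes $[A_\xi]$; if these are measure-independent they are in particular Boolean-independent, so $\alga/\Fin$ is the \emph{free} algebra on $\kappa_0$ generators, i.e.\ $\clop(2^{\kappa_0})$, not the measure algebra $\algb(2^{\kappa_0})$. Its Stone space is $2^{\kappa_0}$, which is separable, so you have not produced a nonseparable remainder. (Relatedly, the cellularity claim is wrong: any algebra with a strictly positive measure is ccc, so $\algb(2^{\kappa_0})$ has cellularity $\omega$; nonseparability of its Stone space is the failure of $\sigma$-centredness, not uncountable cellularity.) If you try to repair this by embedding the \emph{complete} measure algebra into $P(\omega)/\Fin$, you run straight into the Dow--Hart theorem cited in the introduction: under OCA the measure algebra does not embed, and OCA is consistent with $\be=\con$, hence with $(*)$. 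So no variant of ``transport the measure algebra into $P(\omega)/\Fin$'' can succeed from $(*)$ alone.

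Your reading of the two halves of $(*)$ is also off. The set of $A\in 2^\omega$ violating a prescribed asymptotic density on a fixed Boolean combination is null but not closed (density is a $\limsup/\liminf$ condition), so $\cov(\cE)$ does not bound that obstruction. In the paper $\cov(\cE)$ is used in the opposite direction: one fixes $\kappa_0$ closed null sets covering $2^\omega$ and, for each countable packet of them indexed by a cofinal $\cJ\sub[\kappa_0]^{\le\omega}$, adds a single set $X_g$ to the algebra whose infinite complement witnesses that the corresponding countable family of ultrafilters fails to centre $\alga/\Fin$ (Lemma~\ref{3:2}). The role of $\be$ is not to synchronise convergence rates of densities but to keep the extended measure almost strictly positive at each of these steps: with $|\algb|<\be$ one dominates a family $\{h_A:A\in\algb\}\sub\omega^\omega$ so that the new $X_g$ has controlled inner and outer measure against every $A\in\algb$ (Lemmas~\ref{lemma:B} and~\ref{lemma:C}). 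The resulting measure has countable Maharam type, which is consistent with the Dow--Hart obstruction and is precisely what your measure-algebra template cannot give.
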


Note that $(*)$ holds whenever $\be=\con$ or $\kappa_0=\omega_1$.
We do not know whether Theorem \ref{main} can be proved in the usual set theory.
In connection with the result of Dow and Hart mentioned above it is worth remarking that under OCA,
  $\be=\omega_2$ and we can further assume $\omega_2=\con$ (see Moore \cite{Mo02})
  so the compactification we construct here may exist even when the Stone space of the measure
  algebra is not a growth of $\omega$.

We remark that in our proof of Theorem \ref{main} we construct $\gamma\omega$
  such that $\gamma\omega\sm\omega$ supports a measure $\mu$ of countable Maharam type
  (meaning that $L_1(\mu)$ is separable).
We might, however, modify the construction so that the resulting $\mu$ will be of type $\kappa$.

The paper is organized as follows. In section 2 we formulate Theorem \ref{main} in terms of subalgebras of $P(\omega)$ and
  finitely additive measures defined on them, see Theorem \ref{2:2}.
Section 3 describes an inductive construction using (*) that leads to \ref{2:2}.
The key lemma showing that the inductions works is postponed to the final section 4.

\section{Compactifications and Boolean algebras}

We  denote by $\fin$ a family of finite subsets of $\omega$.
In the sequel, we shall consider Boolean algebras (of sets) $\alga$ such that
  $\fin\sub\alga\sub P(\omega)$.
Every such an algebra $\alga$ determines a compactification $K_\alga$ of $\omega$,
  where $K_\alga$ may be seen as the Stone space of all ultrafilters on $\alga$.
Then the algebra $\clop(K_\alga^*)$ of the clopen subsets of the remainder
  $K_\alga^*=K_\alga\setminus\omega$ is isomorphic to the quotient algebra $\alga/\fin$.
Hence $K_\alga^*$ is not separable if and only if $\alga/\fin$ is not $\sigma$-centred.

Given an algebra $\alga$ such that $\fin\sub\alga\sub P(\omega)$, we shall consider finitely
  additive probability measures $\mu$ on $\alga$ that vanish on finite sets.
Such a measure $\mu$  defines, via the Stone isomorphism,  a finitely additive measure
  $\widehat{\mu}$ on $\clop(K_\alga)$.
Then $\widehat{\mu}$ extends uniquely to a regular probability Borel measure $\overline{\mu}$ on
  $K_\alga$ such that $\overline{\mu}(K_\alga^*)=1$.
Note that the resulting Borel measure will be strictly positive whenever $\mu$ has the property
  mentioned in the following definition.

\begin{mydef}\label{2:1}
If $\fin\sub\alga\sub P(\omega)$ and $\mu$ is finitely additive measure on $\alga$
  then we shall say that {\em $\mu$ is almost strictly positive} if for every $A\in\alga$,
  $\mu(A)=0$ if and only if $A\in\fin$.
\end{mydef}

We can summarise our preliminary remarks and conclude that Theorem \ref{main} is an immediate
  consequence of the following result.

\begin{thm}\label{2:2}
Assume $(*)$.
There exists a Boolean algebra $\alga$ such that $\fin\subseteq\alga \subseteq P(\omega)$
  and a finitely additive probability measure $\mu$ on $\alga$ such that

  \begin{enumerate}[(a)]
    \item $\alga/\fin$ is not $\sigma$-centred;
    \item $\mu$ is almost strictly positive.
  \end{enumerate}
\end{thm}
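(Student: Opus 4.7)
The plan is to prove Theorem~\ref{2:2} by a transfinite recursion of length $\le\be$, producing an increasing chain
\[
\fin = \alga_0 \sub \alga_1 \sub \cdots \sub \alga_\xi \sub \cdots
\]
of countable Boolean subalgebras of $P(\omega)$ together with a coherent sequence of almost strictly positive finitely additive probability measures $\mu_\xi$ on $\alga_\xi$. The desired algebra and measure are then $\alga = \bigcup_\xi \alga_\xi$ and $\mu = \bigcup_\xi \mu_\xi$. Condition~(b) will be automatic from the construction, since almost strict positivity is visibly preserved under coherent unions; the entire content of the argument lies in arranging~(a), i.e.\ in killing every countable centred decomposition that the final quotient $\alga/\fin$ could potentially carry.

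The bookkeeping is the place where the hypothesis $(*)$ enters. Any countable centred decomposition of $(\alga/\fin)\sm\{0\}$ is in particular a countable family of subsets of some intermediate $\alga_\xi$, so one can list all such potential witnesses along a cofinal sequence of length $\le\be$ in $[\kappa_0]^{\le\omega}$, whose existence is exactly the content of $(*)$. At stage $\xi$, when we are handed such a family $\{\cC_n : n \in \omega\} \sub \alga_\xi$, the task is to exhibit $B_\xi \sub \omega$ such that, writing $\alga_{\xi+1}$ for the subalgebra generated by $\alga_\xi \cup \{B_\xi\}$, at least one $\cC_n$ is no longer centred modulo $\fin$ in $\alga_{\xi+1}$ (some finite subfamily of $\cC_n$ has intersection with $B_\xi$ or with $\omega\sm B_\xi$ finite), and $\mu_\xi$ admits an extension to an almost strictly positive measure $\mu_{\xi+1}$ on $\alga_{\xi+1}$. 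Limit stages are handled by direct unions, and the global bookkeeping is a routine cofinality argument from $(*)$.

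The principal obstacle, which I expect to be the key lemma deferred to Section~4, is the simultaneous single-step construction of $B_\xi$ together with the extension $\mu_{\xi+1}$: breaking centredness of some $\cC_n$ is a combinatorial demand on $B_\xi$, while maintaining almost strict positivity is a measure-theoretic demand, and the two must be met jointly. The natural route is to represent $\mu_\xi$ as the push-forward of the Haar measure $\lambda$ on $2^\omega$ under a suitable map $\omega \to 2^\omega$, encode the obstructions coming from the family $\{\cC_n\}$ as a countable collection of sets in the ideal $\cE$, and then use $\cov(\cE) = \kappa_0 > \omega$ to choose a point of $2^\omega$ outside all of them; that point determines $B_\xi$ in a way that both forces the desired disruption of some $\cC_n$ and permits an almost strictly positive extension of $\mu_\xi$. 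Thus $\cov(\cE)$ drives each individual inductive step, while $\be$, through $\cof [\kappa_0]^{\le\omega}\le\be$, controls the global length of the recursion, which explains the precise form of $(*)$.
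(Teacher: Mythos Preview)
Your outline has the roles of the two cardinal invariants reversed, and the bookkeeping you describe is circular.

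First, the bookkeeping. You propose to enumerate in advance ``all potential countable centred decompositions'' of $\alga/\fin$ and kill one at each stage. But a centred decomposition is a partition of the \emph{final} algebra, which does not exist until the recursion is over; and every time you adjoin a new generator $B_\xi$ you create continuum-many new elements and hence new potential decompositions that were not on your list. Nothing in $(*)$ lets you enumerate these in type $\le\be$. The paper sidesteps this entirely: it never enumerates decompositions. Instead it starts not from $\fin$ but from the algebra $\alga_0$ generated by traces of clopen subsets of $2^\omega$ on a dense countable $D$, so that every nonprincipal ultrafilter on the final algebra automatically determines a point of $2^\omega$. A covering $\{Z_\alpha:\alpha<\kappa_0\}$ of $2^\omega$ by closed null sets is fixed once, at the outset, and the objects enumerated along the recursion are the elements $J$ of a cofinal family $\cJ\sub[\kappa_0]^{\le\omega}$ of size $\cof[\kappa_0]^{\le\omega}$. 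At stage $J$ one adjoins a single set $X_\vf(\cS^J)=\bigcup_{\alpha\in J}S_\alpha(\vf(\alpha))$. Non-$\sigma$-centredness is then verified \emph{after} the construction: given ultrafilters $p_k$, their associated points $t_k\in 2^\omega$ lie in some $Z_{\alpha_k}$, the countable set $\{\alpha_k\}$ is contained in some $J\in\cJ$, and $D\sm X_\vf(\cS^J)$ witnesses that the $p_k$ do not cover.

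Second, the cardinals. You say $\cov(\cE)$ drives each inductive step while $\be$ bounds the length. It is the other way around. The length of the recursion is $|\cJ|=\cof[\kappa_0]^{\le\omega}$, and $(*)$ says this is $\le\be$ precisely so that every intermediate algebra has size $<\be$. The single-step lemma (the paper's key Lemma~\ref{lemma:C}) uses $|\algb|<\be$ to find $g\in\omega^\omega$ eventually dominating one function $h_A$ per $A\in\algb$; this is a pure bounding-number argument, with no appeal to $\cE$ or to choosing points of $2^\omega$. The number $\cov(\cE)$ appears only once, globally, as the size of the covering family $\{Z_\alpha\}$; it is what makes $[\kappa_0]^{\le\omega}$ the right index set for the bookkeeping.
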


We shall prove Theorem \ref{2:2} by inductive construction,
  gradually enlarging Boolean algebras and extending measures.
If $X\subseteq \omega$ then $\alga[X]$ stands for an algebra generated by $\alga \cup \{X\}$.
It is easy to check that $\alga[X]=\{ (A\cap X)\cup (A'\setminus X)\colon A,A'\in\alga\}$.

If $\mu$ is finitely additive on $\alga$ and $Z\sub\omega$ then we write
  \[\mu_*(Z)=\sup\{\mu(A)\colon A\in\alga, A\sub Z\}, \quad
    \mu^*(Z)=\inf\{\mu(A)\colon A\in\alga, A\supseteq Z\},\]
for the corresponding inner- and outer-measure.
Recall  the following well-known fact about extension of measures,
  see {\L}o{\'s} and Marczewski \cite{LM48}.

\begin{prop}\label{2:3}
The formula
\[ \widetilde\mu\big((A\cap X)\cup(A'\setminus X)\big) = \mu_*(A\cap X) + \mu^*(A'\sm X),\]
defines an extension of $\mu$ to a finitely additive measure $\widetilde\mu$ on $\alga[X]$.
\end{prop}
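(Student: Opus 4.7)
My plan is to check three things in order: that $\widetilde\mu$ is well defined, that it restricts to $\mu$ on $\alga$, and that it is finitely additive on $\alga[X]$. Well-definedness is immediate from the observation that any element $C=(A\cap X)\cup(A'\sm X)$ of $\alga[X]$ satisfies $C\cap X=A\cap X$ and $C\sm X=A'\sm X$, so the pair $(A\cap X,A'\sm X)$ is determined by $C$; hence the right-hand side of the defining formula depends only on $C$.

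For the extension step, I would first establish the duality
\[\mu_*(A\cap Z)+\mu^*(A\sm Z)=\mu(A)\qquad(A\in\alga,\ Z\sub\omega).\]
The $\geq$ inequality is immediate from additivity of $\mu$: for $B\in\alga$ with $B\sub A\cap Z$, one has $A\sm B\in\alga$ and $A\sm B\supseteq A\sm Z$, so $\mu(A)=\mu(B)+\mu(A\sm B)\geq\mu(B)+\mu^*(A\sm Z)$; take the supremum in $B$. The $\leq$ direction follows by approximating $A\sm Z$ by some $C\in\alga$ with $C\supseteq A\sm Z$, replacing $C$ by $C\cap A$, and noting that $A\sm C\in\alga$ is a subset of $A\cap Z$, which gives $\mu_*(A\cap Z)\geq\mu(A)-\mu(C)$; take the infimum in $C$. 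Substituting $Z=X$ yields $\widetilde\mu(A)=\mu(A)$ for every $A\in\alga$.

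The main work is finite additivity. Given disjoint $C_1,C_2\in\alga[X]$ with representations $C_i=(A_i\cap X)\cup(A_i'\sm X)$, disjointness translates to $A_1\cap A_2\cap X=\emptyset$ and $A_1'\cap A_2'\sm X=\emptyset$. Using the well-definedness step I would replace $A_2$ by $A_2\sm A_1$ and $A_2'$ by $A_2'\sm A_1'$, which does not alter $A_2\cap X$ nor $A_2'\sm X$ but arranges that $A_1\cap A_2=\emptyset$ and $A_1'\cap A_2'=\emptyset$ inside $\alga$. The result then reduces to the following partial additivity: if $A_1,A_2\in\alga$ are disjoint and $Z_i\sub A_i$, then
\[\mu_*(Z_1\cup Z_2)=\mu_*(Z_1)+\mu_*(Z_2),\qquad\mu^*(Z_1\cup Z_2)=\mu^*(Z_1)+\mu^*(Z_2).\]
Both identities come from the same splitting idea: any $B\in\alga$ approximating $Z_1\cup Z_2$ from inside (respectively outside) can be cut along $A_1,A_2$ into pieces approximating $Z_1$ and $Z_2$ separately, so the optimal approximation number splits as a sum. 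Applying the two identities on $X$ and on $\omega\sm X$ respectively yields $\widetilde\mu(C_1\cup C_2)=\widetilde\mu(C_1)+\widetilde\mu(C_2)$. The only real obstacle is this last step: inner and outer measures are not additive for arbitrary disjoint subsets of $\omega$, so the preparatory bookkeeping that isolates $Z_1$ and $Z_2$ inside disjoint $\alga$-sets is essential before partial additivity can be invoked.
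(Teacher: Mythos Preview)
Your argument is correct. The paper does not actually prove this proposition: it is stated as a ``well-known fact about extension of measures'' with a reference to \L o\'s and Marczewski \cite{LM48}, and no proof is given. So there is nothing to compare your approach against; you have supplied a complete direct verification where the authors simply quote the literature.

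One cosmetic remark: in your duality step the labels of the two inequalities are swapped relative to the displayed equation $\mu_*(A\cap Z)+\mu^*(A\sm Z)=\mu(A)$ (your first argument shows $\le$, your second shows $\ge$), but the mathematics is right either way. The reduction for additivity---replacing $A_2$ by $A_2\sm A_1$ and $A_2'$ by $A_2'\sm A_1'$ so that the ambient $\alga$-sets become genuinely disjoint before invoking partial additivity of $\mu_*$ and $\mu^*$---is exactly the point that needs to be made, and you have identified it clearly.
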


\section{A construction}

In this section we prove Theorem \ref{2:2}.
We start by fixing a countable dense subset $D$ of the Cantor space $2^\omega$ playing the role
  of  $\omega$ (and $\fin$ stands for finite subsets of $D$).
Let $\alga_0$ be the subalgebra of $P(D)$ generated by $\{C\cap D: C\in \clop(2^\omega)\}$
  and all finite subsets of $D$.
Further let $\mu_0$ be a measure on $\alga_0$ defined by
\[ \mu_0(C\btu F)=\lambda(C)
 \mbox{ for } C\in\clop(2^\omega) \mbox{  and } F\in\fin.\]
Clearly, $\mu_0$ is well-defined and almost strictly positive on $\alga_0$.
Note that $\mu_0$ is nonatomic, in the sense that every element of $\alga_0$ may be written
  as a finite union of sets of arbitrary small measure.

Our construction requires a certain bookkeeping of families of sequences in $\alga_0$.
It will be convenient to use the following definition.

\begin{mydef}\label{3:0}
  A countable family $\cS\sub (\alga_0)^\omega $ will be  called an \emph{$s-$family}
    if  for every $S=(S(k))_k\in\cS $
  \begin{enumerate}[(i)]
    \item $S(0)\supseteq S(1)\supseteq\ldots$ and $\bigcap_k S(k)=\emptyset$;
    \item $\lim_k \lambda_0 (S(k))=0$.
  \end{enumerate}
\end{mydef}

Let $\cS$ be a countable infinite $s-$family together with  some fixed enumeration
  $\cS=\{S_n:n\in\omega\}$.
Then for any $\vf\in\omega^\omega$ we write
   \[  X_\varphi(\cS) = \bigcup_{n\in\omega} S_n\big(\varphi(n)\big) . \]

We shall write below  $\kappa_0=\cov(\cE)$ and $\kappa= \cof[\kappa_0]^{\le\omega}$ for simplicity.

Choose a family $\{Z_\alpha\colon\alpha<\kappa_0\}$ of closed subsets of $2^\omega$ such that
  $\lambda(Z_\alpha)=0$ for every $\alpha<\kappa_0$ and
  $\bigcup_{\alpha<\kappa_0}Z_\alpha=2^\omega$.
To every $Z_\alpha$ we associate $S_\alpha\in\alga_0^\omega$ as follows.
Fix  a bijection $d\colon\omega\to D$.
Write $Z_\alpha$ as an intersection of a decreasing sequence of $C_{\alpha, k}\in\clop(2^\omega)$
  and set \[S_\alpha(k)=C_{\alpha,k}\cap D\sm \{d(0) ,\ldots,d(k-1) \}.\]

Let $\cJ$ be a cofinal family in $[\kappa_0]^{\le\omega}$ of cardinality $\kappa$.
Given $J\in\cJ$, write $\cS^J=\{S_\alpha:\alpha\in J\}$.
Then $\cS^J$ is an $s-$family in our terminology.
For any $\vf\in\omega^J$ we put \[ X_\vf(\cS^J)=\bigcup_{\alpha\in J} S_\alpha(\vf(\alpha)).\]

\begin{lemma}\label{3:1}
Assume $(*)$.
There are a family $(\alga_\xi)_{\xi<\kappa}$ of Boolean subalgebras of $P(D)$
  and a family $(\mu_\xi)_{\xi<\kappa}$, where every $\mu_\xi$  is a finitely additive
  measure defined on $\alga_\xi$, such that,
  writing $\alga=\bigcup_{\xi<\kappa} \alga_\xi$, we have

  \begin{enumerate}[(i)]
    \item $|\alga_\xi|<\kappa$ for every $\xi<\kappa$;
    \item $\mu_\xi$ is almost strictly positive on $\alga_\xi$;
    \item $\mu_\eta|\alga_\xi=\mu_\xi$ whenever $\xi<\eta<\kappa$;
    \item for every $J\in\cJ $  there is $\vf\in\omega^J$ such that $ X_\vf(\cS^J)\in\alga$
      and $\omega\sm X_\vf(\cS^J)$ is infinite.
  \end{enumerate}
\end{lemma}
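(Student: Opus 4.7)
The plan is to perform a transfinite recursion of length $\kappa = \cof[\kappa_0]^{\le\omega}$. Fix an enumeration $\cJ = \{J_\xi : \xi < \kappa\}$ and take $(\alga_0, \mu_0)$ as above. At successor stage $\xi+1$, given $(\alga_\xi, \mu_\xi)$, we produce $\vf_\xi \in \omega^{J_\xi}$ (see below) and set $\alga_{\xi+1} = \alga_\xi[X_{\vf_\xi}(\cS^{J_\xi})]$, letting $\mu_{\xi+1}$ be the {\L}o{\'s}--Marczewski extension of $\mu_\xi$ supplied by Proposition \ref{2:3}. At limit stages we take unions, which is coherent by condition (iii).

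The key task at each successor step is to choose $\vf = \vf_\xi$ so that, writing $X = X_\vf(\cS^{J_\xi})$: (a) $D \sm X$ is infinite, and (b) the extension $\widetilde\mu_\xi$ is almost strictly positive on $\alga_\xi[X]$. Condition (a) is the easy half: since $\lambda(Z_\alpha) = 0$ and $C_{\alpha,k} \searrow Z_\alpha$, one can make $\vf(\alpha)$ so large that $\sum_{\alpha \in J_\xi} \lambda(C_{\alpha,\vf(\alpha)})$ is arbitrarily small, whereby $\bigcup_\alpha C_{\alpha,\vf(\alpha)}$ omits a nonempty (hence $D$-infinite) open subset of $2^\omega$. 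Condition (b) is the heart of the matter and should be the content of the key lemma postponed to Section 4.

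To see why (b) is plausible, recall that the extended measure reads $\widetilde\mu_\xi((B \cap X) \cup (A \sm X)) = (\mu_\xi)_*(B \cap X) + (\mu_\xi)^*(A \sm X)$. Almost strict positivity fails only if some infinite set of this form has both summands vanish, and each such failure corresponds to a ``threat'' indexed by a pair $(A,B) \in \alga_\xi \times \alga_\xi$. Each individual threat should be eliminable by requiring $\vf$ to grow faster than some function $f \in \omega^{J_\xi}$, essentially forcing $X$ to avoid or shrink past the offending configuration. The total collection of threats has size at most $|\alga_\xi|^2 < \kappa \le \be$, so they can be simultaneously dominated using the bounding number --- this is the step where the assumption $(*)$ enters essentially.

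Finally, (i) is automatic since each successor stage adds only one generator, giving $|\alga_\xi| \le |\xi| + \aleph_0 < \kappa$; (ii) and (iii) are preserved by construction and by Proposition \ref{2:3}; and (iv) holds because every $J \in \cJ$ is processed at some stage. The main obstacle throughout is the existence of $\vf_\xi$ satisfying (b), which is precisely the combinatorial problem postponed to Section 4.
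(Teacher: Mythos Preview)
Your overall approach matches the paper's: enumerate $\cJ$, start from $(\alga_0,\mu_0)$, and at each stage invoke the key lemma of Section~4 to adjoin a single $X_\vf(\cS^{J_\xi})$ while extending the measure almost strictly positively. The paper organizes the recursion slightly differently, treating every stage uniformly by setting $\algb=\bigcup_{\beta<\xi}\alga_\beta$ rather than splitting into successor and limit cases, but this is cosmetic.

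There is, however, a genuine gap in your argument for condition (a). The set $\bigcup_{\alpha\in J_\xi} C_{\alpha,\vf(\alpha)}$ is open, so its complement is merely closed; having measure strictly less than $1$ does \emph{not} force that complement to contain a nonempty open set (a dense open set of small measure is the standard obstruction), and hence you cannot conclude that it meets $D$ in an infinite set. In the paper, the infinitude of $D\sm X_g$ is not handled separately at all but is derived \emph{inside} Lemma~\ref{lemma:C} from the inner-measure estimate $\nu_*(X_g)\le 1/2$: were $D\sm X_g$ finite, a cofinite element of $\alga_0$ would sit inside $X_g$, forcing $\nu_*(X_g)=1$. That estimate in turn rests on Lemma~\ref{lemma:B}, which says that any $B\in\algb$ with $B\sub X_g$ already lies in a finite partial union $\bigcup_{j\le N}S_j(g(j))$. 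So (a) and (b) are not as cleanly separable as you suggest; both are consequences of the same machinery in Section~4, and your sketch should defer (a) there as well.
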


\begin{proof}
Enumerate $\cJ$ as $(J_\xi)_{1\le \xi<\kappa}$.
We start from $\alga_0$ and $\mu_0$ defined at the beginning of this section.
Fix $\xi<\kappa$.
Given $\alga_\beta$ and $\mu_\beta$ for $\beta<\xi$ we apply
  Lemma \ref{lemma:C} from section 4 to the algebra $\algb=\cup_{\beta<\xi}\alga_\beta$,
  the measure $\nu$ on $\algb$ which extends all $\mu_\beta$, $\beta<\xi$ and
  an $s-$family $\cS=\cS^{J_\xi}$.
Then Lemma \ref{lemma:C} gives us a suitable $X=X_\vf(\cS^{J_\xi})$ and
 we let $\alga_\xi$ be $\algb[X]$ and $\mu_\xi$ be an extension of $\nu$ to an
 almost strictly positive measure on $\alga_\xi$.
\end{proof}

Now we check that the algebra $\alga$ satisfying (i)-(iv) of Lemma \ref{3:1}
  is the one we are looking for.

\begin{lemma}\label{3:2}
If $\alga$ is an algebra is in \ref{3:1} then $\alga$ carries an almost strictly
  positive finitely additive probability measure and $\alga/\fin$ is not $\sigma$-centred.
\end{lemma}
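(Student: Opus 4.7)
The two conclusions are handled separately. The measure assertion is routine: conditions (ii) and (iii) of Lemma~\ref{3:1} say that $(\mu_\xi)_{\xi<\kappa}$ is a coherent chain of almost strictly positive measures, so I would simply set $\mu(A):=\mu_\xi(A)$ for any $\xi$ with $A\in\alga_\xi$; this is well-defined, finitely additive and satisfies $\mu(A)=0\iff A\in\fin$. The remainder of the argument is devoted to the $\sigma$-centredness clause.

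To show $\alga/\fin$ is not $\sigma$-centred I would argue by contradiction: assume $\alga/\fin=\bigcup_{n\in\omega}\cF_n$ with each $\cF_n$ centred. For each $n$, the preimage of $\cF_n\sm\{0\}$ in $\alga$ is a family of infinite sets whose finite intersections remain infinite; adjoining the filter of cofinite subsets of $D$ preserves the finite intersection property, and Zorn's lemma then extends this to a free ultrafilter $\cU_n$ on $\alga$ containing the preimage of $\cF_n$.

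The bridge to the construction is the injective Boolean embedding $C\mapsto C\cap D$ of $\clop(2^\omega)$ into $\alga_0\sub\alga$ (injective because $D$ is dense in $2^\omega$). Pulling each $\cU_n$ back through this embedding yields an ultrafilter on $\clop(2^\omega)$, i.e.\ a unique point $p_n\in 2^\omega$ characterised by $p_n\in C\iff C\cap D\in\cU_n$. Using the covering $\bigcup_{\alpha<\kappa_0}Z_\alpha=2^\omega$, I would choose $\alpha_n$ with $p_n\in Z_{\alpha_n}$; by cofinality of $\cJ$ in $[\kappa_0]^{\le\omega}$, the countable set $\{\alpha_n:n\in\omega\}$ is contained in some $J\in\cJ$.

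Applying Lemma~\ref{3:1}(iv) to this $J$ produces $\vf\in\omega^J$ with $X:=X_\vf(\cS^J)\in\alga$ and $D\sm X$ infinite. On the one hand $D\sm X\notin\fin$, so $[D\sm X]\in\cF_{n_0}$ for some $n_0$ and hence $D\sm X\in\cU_{n_0}$. On the other hand $p_{n_0}\in Z_{\alpha_{n_0}}=\bigcap_kC_{\alpha_{n_0},k}$ forces $C_{\alpha_{n_0},k}\cap D\in\cU_{n_0}$ for every $k$, so by freeness $S_{\alpha_{n_0}}(k)=C_{\alpha_{n_0},k}\cap D\sm\{d(0),\dots,d(k-1)\}\in\cU_{n_0}$ for every $k$; taking $k=\vf(\alpha_{n_0})$ gives $X\supseteq S_{\alpha_{n_0}}(\vf(\alpha_{n_0}))\in\cU_{n_0}$, contradicting $D\sm X\in\cU_{n_0}$. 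The only genuinely nontrivial point---and the reason the construction of Lemma~\ref{3:1} was engineered this way---is that any hypothetical countable $\sigma$-centring of $\alga/\fin$ produces only countably many test points in $2^\omega$, whose null-set indices can be absorbed into a single $J\in\cJ$; this is precisely what cofinality of $[\kappa_0]^{\le\omega}$ in $(*)$ and clause (iv) are set up to defeat.
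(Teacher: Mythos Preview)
Your proof is correct and follows essentially the same route as the paper. The paper phrases the non-$\sigma$-centredness check directly (given any countable family of nonprincipal ultrafilters on $\alga$, find an infinite $A\in\alga$ missed by all of them), whereas you start from a hypothetical $\sigma$-centring decomposition and manufacture the ultrafilters yourself; after that, both arguments are identical---pass to points of $2^\omega$, pick covering indices $\alpha_n$, absorb them into some $J\in\cJ$, invoke clause~(iv), and observe that $D\setminus X_\vf(\cS^J)$ is the required infinite set.
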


\begin{proof}
It is clear that if we let $\mu$ be the unique measure on $\alga$ extending all
  $\mu_\xi $'s then $\mu$ is as required.

Checking that $\alga/\fin$ is not $\sigma$-centred amounts to verifying that
  if  $\{ p_k:k\in\omega\}$ is a family of nonprincipial ultrafilters on $\alga$ then
  there is an infinite $A\in\alga$ such that $A\notin p_k$ for every $k$.

Clearly every $p_k$ defines the unique $t_k\in 2^\omega$ which is in the intersection
  \[\bigcap \{C\in\clop(2^\omega): C\cap D\in p_k\}.\]
Then $t_k\in Z_{\alpha_k}$ for some $\alpha_k<\kappa_0$.
An $s-$family $\{S_{\alpha_k}: k\in\omega\}$ is contained in $\cS^J$ for some $J\in\cJ$.
Take $X_\vf(\cS^J)$ as in \ref{3:1}(iv).
Then $A=D\sm  X_\vf(\cS^J)$ is an infinite set lying outside every $p_k$, as required.
\end{proof}

\section{Key lemma}

We shall now  prove an auxiliary result, stated below as  Lemma \ref{lemma:C}, showing that the inductive construction of Lemma \ref{3:1} can  be carried out.
We follow here the notation introduced in section 3; in particular, the notion of an $s$-family was introduced in \ref{3:0}.
Recall also that, given an $s$-family $\cS=\{ S_i:i\in I\}$,  any $J\sub I$  and a function $\vf\in\omega^J$, we write $X_\vf(\cS)=\bigcup_{i\in J} S_i(\vf(i))$.
We sometimes write $X_\vf$ rather than $X_\vf(\cS)$ if $\cS$ is clear from the context.

\begin{lemma}\label{lemma:A}
  Let $\cS = \{S_0, \ldots, S_{n-1}\}$ be a fixed finite s-family. For a given infinite set $A\sub\omega$ we put
  \[
  \Phi_n^A = \{\varphi \in \omega^n \colon |A\setminus X_\varphi(\cS)| = \omega\}.\]
 If we consider $\Phi_n^A$ with the natural partial order then it has finitely many minimal elements.
\end{lemma}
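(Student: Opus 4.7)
The plan is to deduce the lemma from Dickson's lemma: the minimal elements of any subset of $(\omega^n,\le)$ with the componentwise order form an antichain, and $\omega^n$ contains no infinite antichain.

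First, I would fix the ``natural partial order'' as the componentwise order on $\omega^n$ and record the mild observation that $\Phi_n^A$ is then upward closed. This uses condition~(i) of Definition~\ref{3:0}: since each $S_i(k)\supseteq S_i(k+1)$, one has $X_\varphi(\cS)\supseteq X_\psi(\cS)$ whenever $\varphi\le\psi$ pointwise, so $A\sm X_\varphi\subseteq A\sm X_\psi$. Consequently $\psi\in\Phi_n^A$ whenever $\varphi\in\Phi_n^A$ and $\varphi\le\psi$, confirming that asking for minimal elements is the natural thing to do.

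Next, since the minimal elements of any subset of a poset form an antichain, the lemma reduces to the classical fact that $(\omega^n,\le)$ has no infinite antichain. I would prove this by induction on $n$. The case $n=1$ is immediate because $\omega$ is linearly ordered. For the step from $n$ to $n+1$, suppose $(\varphi_k)_{k<\omega}$ were an infinite antichain in $\omega^{n+1}$; passing to a subsequence one may assume $\varphi_k(0)$ is nondecreasing in $k$, since any sequence in $\omega$ admits an infinite monotone subsequence. The tails $(\varphi_k(1),\ldots,\varphi_k(n))$ then still form an infinite antichain in $\omega^n$, because any comparability between two tails, combined with the comparability of the corresponding first coordinates, would produce a comparability of the original tuples. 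This contradicts the inductive hypothesis.

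The main obstacle is essentially nil: the statement is a finitary combinatorial core, and condition~(ii) of Definition~\ref{3:0} plays no role in it. The real work will lie in Lemma~\ref{lemma:C}, where the finiteness of minimal elements furnished here is presumably leveraged into the extension step of the inductive construction.
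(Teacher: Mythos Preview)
Your approach via Dickson's lemma is correct in spirit and is cleaner than the paper's argument. The paper proceeds by an induction tailored to the sets $\Phi_n^A$ themselves: it fixes one minimal $\varphi_0\in\Phi_n^A$, enumerates the finitely many restrictions $\psi\le\varphi_0\!\upharpoonright\! I$ for proper $I\subsetneq n$ with $A\setminus X_\psi$ infinite, and applies the inductive hypothesis to $\Phi_{n\setminus I}^{A\setminus X_\psi}$ to list all remaining minimal elements. Your route bypasses this bookkeeping entirely by observing that the conclusion is just ``a subset of $\omega^n$ has finitely many minimal elements'', which is pure Dickson and uses nothing about $\Phi_n^A$ beyond being a subset of $\omega^n$. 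That is a genuine simplification; the upward-closedness remark is pleasant but, as you implicitly note, not needed for the finiteness claim.

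There is, however, a small slip in your inductive step for Dickson's lemma. After passing to a subsequence with $\varphi_k(0)$ nondecreasing, it does \emph{not} follow that the tails form an antichain: if $j<k$, $\varphi_j(0)<\varphi_k(0)$, and the tail of $\varphi_k$ is below the tail of $\varphi_j$, then the tails are comparable while $\varphi_j,\varphi_k$ remain incomparable (e.g.\ $(0,1)$ and $(1,0)$). The standard fix is to prove the stronger statement that every infinite sequence in $\omega^n$ contains an infinite subsequence that is nondecreasing in every coordinate: thin first so that the first coordinate is nondecreasing, then apply the inductive hypothesis to the tails of that subsequence. An infinite antichain would then contain two comparable elements, a contradiction. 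With this correction your argument is complete.
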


\begin{proof}
  It is pretty obvious that for every $\vf\in \Phi_n^A$ there is minimal $\vf'\in\Phi_n^A$ such that $\vf'\leq \vf$.

  Fix an infinite set  $A\sub\omega$.
  We prove the lemma by induction on $n$.
  For $n=1$ it is trivial since $\Phi_1^A \subseteq \omega$  is well-ordered.
  Assume that for any $k < n$ and any infinite $B\sub\omega$ the family  $\Phi_k^B$ has finitely many minimal elements.

  Fix some minimal $\varphi_0 \in \Phi_n^A$.
  Consider any non-empty set $I\subseteq n$ of size less than $n$ and any function
  $\psi \in \omega^I$ such that $\psi \leq \varphi_0\!\upharpoonright\! I$
  and $A\setminus X_\psi$ is infinite
  (it is important that there is only finitely many such $\psi$ and sets $I\subseteq n$).
  By inductive assumption applied to  $A\sm X_\psi$ and an $s-$family indexed by  $n\sm I$,
  there exist finitely many minimal elements in $\Phi_{n\sm I}^{A\sm X_\psi}$ (call them $\varphi^j$).
  Observe that any minimal element $\varphi \in \Phi_n^A$ is of the form
  $\varphi = \psi \cup \varphi^j$ for some $j$, so the proof is complete.
\end{proof}

\begin{lemma}\label{lemma:B}
  Let $\algb \subseteq P(D)$ be a Boolean algebra of size less than $\be$ and let $\cS = \{S_n:n\in\omega \}$
  be a fixed $s-$family in $\algb^\omega$.

  There exists $g_0 \in \omega^\omega$ such that
  for any $g\in\omega^\omega$ with $g \geq g_0$,
  whenever $A\in\algb$ and $A\sub X_g$ then $A\sub\bigcup_{j\le N} S_j(g(j))$ for some $N\in\omega$.

\end{lemma}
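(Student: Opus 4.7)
My plan is to construct, for each $A \in \algb$, a function $f_A \in \omega^\omega$ capturing the combinatorial obstructions to covering $A$ finitely by $\cS$, and then to let $g_0$ eventually dominate the family $\{f_A\}_{A \in \algb}$. This is where the hypothesis $|\algb| < \be$ enters: a family of fewer than $\be$ functions in $\omega^\omega$ is bounded in the eventual-dominance order. The pointwise hypothesis $g \ge g_0$ of the lemma then gives $g \ge^* f_A$ for every $A$, which I will show is enough.

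To define $f_A$, I would appeal to Lemma~\ref{lemma:A} at every level: for each $n$, the set $\Phi_n^A \subseteq \omega^n$ has only finitely many minimal elements, and since $\Phi_n^A$ is upward closed in the pointwise order (raising coordinates shrinks $X_\vf$ and so can only enlarge $A \setminus X_\vf$), those finitely many minimal elements form a finite frontier separating almost-cover from not-almost-cover. A natural candidate is $f_A(n) = 1 + \max\{\vf(n) : \vf \in M_{n+1}^A\}$, where $M_{n+1}^A$ is the set of minimal elements of $\Phi_{n+1}^A$; this is finite by Lemma~\ref{lemma:A}, and would make $g \upharpoonright m$ sit strictly above the frontier whenever $g \ge f_A$.

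For the verification, fix $g \ge g_0$ and $A \in \algb$ with $A \subseteq X_g$. A first reduction shows that it suffices to find some $n$ with $A \setminus \bigcup_{j<n} S_j(g(j))$ finite: its residual points lie in $X_g$ and can each be caught by some $S_m(g(m))$, so enlarging $N$ to include the resulting indices covers $A$. So suppose, for contradiction, that $g \upharpoonright n \in \Phi_n^A$ for every $n$ while $A \subseteq X_g$. By the design of $f_A$, each single-coordinate decrement of $g \upharpoonright n$ exits $\Phi$, which forces infinitely many points of $A$ into the annulus $S_n(g(n)-1)\setminus S_n(g(n))$ and into no other $S_j(g(j))$. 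Combining these infinitely many disjoint annular witnesses with the measure condition $\lambda_0(S_n(k)) \to 0$ from Definition~\ref{3:0}(ii) should then yield a point of $A$ missed by every $S_n(g(n))$, contradicting $A \subseteq X_g$.

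The hard part will be exactly this last step: extracting a concrete witness in $A \setminus X_g$ from the combinatorial failure of almost-cover at every stage. It is here that the measure clause of the $s$-family definition, previously unused, must be brought to bear, and that the construction of $f_A$ has to be calibrated finely enough so that eventual $\be$-domination really is sufficient to force the contradiction. The surrounding machinery---reducing exact cover to almost cover, invoking $|\algb|<\be$, and applying Lemma~\ref{lemma:A}---is routine by comparison.
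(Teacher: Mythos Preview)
Your overall strategy is right: build a function $f_A$ for each $A\in\algb$, use $|\algb|<\be$ to find a dominating $g_0$, and reduce to showing that some initial segment $\bigcup_{j<N}S_j(g(j))$ already almost covers $A$. That reduction is exactly what the paper does. But your proposed $f_A$ and the verification sketch after it do not work.

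First, the claim ``each single-coordinate decrement of $g\upharpoonright n$ exits $\Phi$'' is backwards. The set $\Phi_n^A$ is upward closed, so if $g(j)$ is strictly above the $j$-th coordinate of every minimal element (which is what your choice $f_A(j)=1+\max\{\vf(j):\vf\in M_{j+1}^A\}$ forces), then lowering a single coordinate by one still keeps $g\upharpoonright n$ coordinatewise above some minimal element, hence \emph{inside} $\Phi_n^A$. So the ``annulus'' picture collapses, and nothing forces points of $A$ out of $X_g$.

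Second, and more importantly, you are looking in the wrong place for the missing idea. The measure clause (ii) of Definition~\ref{3:0} is not used here at all; the paper's proof of this lemma uses only clause (i), that $\bigcap_k S_n(k)=\emptyset$. The correct device is to extract from Lemma~\ref{lemma:A} not the \emph{values} of the minimal elements but a finite set of \emph{witness points}: since $\Phi_n^A$ has finitely many minimal elements and $A\setminus X_\vf$ is infinite for each of them, there is a finite $I_n\subseteq\omega$ meeting $A\setminus X_\vf$ for every $\vf\in\Phi_n^A$. Now define $h_A(n)$ large enough that $S_n(h_A(n))$ is disjoint from $\bigcup_{j\le n}I_j$; this is possible precisely because $\bigcap_k S_n(k)=\emptyset$. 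If $g\ge g_0\ge^* h_A$ and $N$ is where domination begins, then $g\upharpoonright N\in\Phi_N^A$ would give a point of $I_N\cap(A\setminus X_{g\upharpoonright N})$ which, by construction, lies in no $S_n(g(n))$ for $n\ge N$ either --- contradicting $A\subseteq X_g$. So $A\setminus X_{g\upharpoonright N}$ is finite, and you are done.
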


\begin{proof}
  Fix $A\in\algb$. It follows from Lemma \ref{lemma:A} that for any $n\in\omega$ there exists a finite set
    $I_n \subseteq \omega$ such that if $\varphi \in \omega^n$ and
    $|A\sm X_\varphi|=\omega$ then $(A\sm X_\varphi) \cap I_n \neq \emptyset$.
  We inductively define a function $h_A\in\omega^\omega $ such that
 \[S_n\big(h_A(n)\big) \cap \bigcup_{j\le n} I_j = \emptyset,\]
  which can be done since   $(S_n(k))_{k}$  is a decreasing sequence with empty intersection.

  Now the family of functions $\{h_A:A\in\algb\}$ is of size less than $\be$  so there exists $g_0 \in \omega^\omega$
  such that $h_A \leq^* g_0$ for any $A\in \algb$. We shall check that $g_0$ is as required.

    Take any $A\in\algb$ and $g\geq g_0$, and suppose that $ A\subseteq X_g$.
    Let $N\in\omega$ be such that $h_A(n) \le g(n)$ for any $n \ge N$. Write $\vf$ for the restriction of $g$ to $N$.
 Note that the set $A\sm X_\vf$ must be finite; indeed, otherwise $\vf\in \Phi_N^A$ so $J=(A\sm X_\vf)\cap  I_N\neq\emptyset$.
  But $I_N\cap S_n(g(n))=\emptyset$ for every $n\ge N$, hence $J\sub A\sm X_g$ which means that $A$ is not contained in $X_g$, contrary to our assumption.

  As  $A\sm X_\vf$ is finite, the lemma follows.
  \end{proof}

We are now ready for the key lemma; recall that the measure $\mu_0$ on $\alga_0$ was introduced in section 3.

\begin{lemma}\label{lemma:C}
Let $\algb \subseteq P(D)$ be a Boolean algebra of size  $<\be$  containing $\alga_0$
and let $\nu$ be a finitely additive almost strictly positive  probability measure on $\algb$ extending $\mu_0$.
Let $\cS=\{S_n:n\in\omega\} $ be a fixed  $s-$family contained in $\alga_0^\omega$.

There exists $g \in \omega^\omega$ such that for  $X_g=X_g(\cS)$,
  $\omega\setminus X_g$ is infinite and $\nu$ can be extended to an almost strictly positive
  measure on $\algb[X_g]$.
\end{lemma}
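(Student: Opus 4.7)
The plan is to construct $g\in\omega^\omega$ via a bounding-number diagonalisation, combining the extension from Proposition \ref{2:3} with the covering control of Lemma \ref{lemma:B}. For that extension, almost strict positivity of $\widetilde\nu$ on $\algb[X_g]$ reduces to checking, for every $A\in\algb$: (P1) if $A\cap X_g$ is infinite then $\nu_*(A\cap X_g)>0$, and (P2) if $A\setminus X_g$ is infinite then $\nu^*(A\setminus X_g)>0$.

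First I would apply Lemma \ref{lemma:B} to obtain $g_0$ such that, for every $g\ge g_0$, any $B\in\algb$ with $B\subseteq X_g$ is contained in $\bigcup_{j\le N}S_j(g(j))$ for some $N$. Every $g$ I construct will satisfy $g\ge g_0$ together with the uniform mass bound $\nu(S_n(g(n)))<2^{-n-2}$, so that $s:=\sum_n\nu(S_n(g(n)))<1/2$. These two conditions alone force ``$\omega\setminus X_g$ infinite'': for every finite $F\subset D$, $D\setminus F\in\algb$ has $\nu$-measure $1>s$ and hence, by Lemma \ref{lemma:B}, cannot be contained in $X_g$, so $(D\setminus X_g)\setminus F\neq\emptyset$.

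For each $A\in\algb$, put $\alpha(A):=\{n:\nu(A\cap S_n(k))>0\text{ for all }k\}$. If $n\notin\alpha(A)$ then $\nu(A\cap S_n(k))=0$ for $k$ large, so by almost strict positivity $A\cap S_n(k)$ is finite; since $(A\cap S_n(k))_k$ is a decreasing sequence of finite sets with empty intersection, it is eventually empty, and we let $h_A(n)$ be the least $k$ with $A\cap S_n(k)=\emptyset$ (setting $h_A(n)=0$ for $n\in\alpha(A)$). Since $|\{h_A:A\in\algb\}|\le|\algb|<\be$, I pick $g\ge g_0$ meeting the mass bound together with $g\ge^*h_A$ for every $A\in\algb$. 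Property (P1) is then verified by a dichotomy on $\alpha(A)$: if $\alpha(A)\neq\emptyset$, any $n_0\in\alpha(A)$ supplies the infinite $\algb$-subset $A\cap S_{n_0}(g(n_0))$ of $A\cap X_g$; if $\alpha(A)=\emptyset$, then $g\ge^*h_A$ forces $A\cap X_g$ to be a finite union of finite sets, so the hypothesis of (P1) fails. For (P2), the subcases $\alpha(A)=\emptyset$ (where $A\cap X_g$ is finite, so $\nu^*(A\setminus X_g)=\nu(A)>0$) and $\nu(A)>s$ (where Lemma \ref{lemma:B} yields $\nu_*(A\cap X_g)\le s$, hence $\nu^*(A\setminus X_g)\ge\nu(A)-s>0$) are both immediate.

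The main obstacle I anticipate is the remaining subcase of (P2), namely $\alpha(A)\neq\emptyset$ combined with $\nu(A)\le s$. Since each $\nu(S_n(k))\to 0$ by the $s$-family definition, the quantities $\nu(A\cap S_n(g(n)))$ can be made arbitrarily small by choosing $g(n)$ large for each fixed $n$, but the bounding number only supplies $\ge^*$-dominance, not pointwise. My plan is to strengthen $h_A$ to the function $n\mapsto\min\{k:\nu(A\cap S_n(k))<\nu(A)\cdot 2^{-n-2}\}$, which ensures $\sum_{n\ge N_A}\nu(A\cap S_n(g(n)))<\nu(A)/2$, and then to combine this with the uniform mass bound on the initial segment $n<N_A$ to derive $\nu_*(A\cap X_g)<\nu(A)$ via Lemma \ref{lemma:B}. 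Reconciling the $A$-dependence of $N_A$ with the uniform estimate on the initial segment is the key technical step, where I expect the interplay between the bounding number and the countable additivity of $\mu_0$ on $\alga_0$ (which guarantees that $\nu(A\cap S_n(k))$ really does tend to $0$, even though $\nu$ is only finitely additive on $\algb$) to play a decisive role.
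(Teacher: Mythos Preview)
Your handling of (P1) and the two easy subcases of (P2) is essentially the paper's argument (minor quibble: when $\alpha(A)=\emptyset$ the sets $A\cap S_n(g(n))$ for $n<N_A$ need not be finite, but any infinite one immediately gives $\nu_*(A\cap X_g)>0$, so (P1) still holds). The genuine gap is the ``hard'' subcase of (P2), and your proposed fix does not close it.

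Your plan is to bound $\nu_*(A\cap X_g)\le\sum_n\nu(A\cap S_n(g(n)))$ and keep this strictly below $\nu(A)$. Your strengthened $h_A$ controls the tail $n\ge N_A$ by $\nu(A)/2$, but for the initial segment $n<N_A$ you have only the uniform bound $\nu(S_n(g(n)))<2^{-n-2}$, giving a contribution up to $1/2$ --- which is useless once $\nu(A)\le 1/2$. The obstruction is structural: $N_A$ is determined only after $g$ is chosen, and on the block $n<N_A$ you have no leverage tied to $A$. Rescaling the threshold by $\nu(A)$ cannot help, because the quantity that actually needs to dominate the tail is not $\nu(A)$ but $\nu(A\setminus X_\varphi)$ with $\varphi=g\upharpoonright N_A$, and this can be arbitrarily small compared to $\nu(A)$.

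What is missing is a uniform \emph{lower} bound on $\nu(A\setminus X_\psi)$ valid for every $\psi\in\omega^n$ with $A\setminus X_\psi$ infinite, call it $\varepsilon_n^A>0$. Such a bound exists precisely because the poset $\Phi_n^A$ has only finitely many minimal elements (Lemma~\ref{lemma:A}); take $\varepsilon_n^A$ to be the minimum of $\nu(A\setminus X_\psi)$ over those. Now set $h_A(n)$ so that $\nu(S_n(h_A(n)))\le\varepsilon_n^A\,2^{-n-2}$. Whatever $N_A$ turns out to be, splitting $A\setminus X_g=(A\setminus X_\varphi)\setminus Y$ with $Y=\bigcup_{n\ge N_A}S_n(g(n))$ gives $\nu(A\setminus X_\varphi)\ge\varepsilon_{N_A}^A$ and, via Lemma~\ref{lemma:B}, $\nu_*(Y)\le\sum_{n\ge N_A}\varepsilon_n^A\,2^{-n-2}\le\varepsilon_{N_A}^A/2$, hence $\nu^*(A\setminus X_g)\ge\varepsilon_{N_A}^A/2>0$. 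The point is that the thresholds $\varepsilon_n^A$ are indexed by the \emph{length} $n$ of the initial segment, so they automatically match whatever $N_A$ is realised --- this is exactly the ``reconciliation'' you flagged as the key step, and it hinges on the finiteness result of Lemma~\ref{lemma:A}, which your proposal never invokes.
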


\begin{proof}
  For any $X \subseteq P(\omega)$ we may consider  an~extension $\widetilde\nu$ of $\nu$ to a
    finitely additive measure on  $\algb[X]$ given by the formula as in Proposition \ref{2:3}.
  The plan is  to find a function $g \in\omega^\omega$ such that the measure
    $\widetilde\nu$ defined in this way is almost strictly positive on $\algb[X_g]$.
  Observe that the extended measure $\widetilde\nu$ \emph{will} be almost strictly
    positive if for every $A\in\algb$

  \begin{enumerate}[\ref{lemma:C}(i)]
  \item  $\nu_*(A\cap X_g) > 0$ whenever  $A\cap X_g$ infinite,
  \item $\nu^*(A\setminus X_\vf) >0$ whenever  $A\sm X_\vf$ infinite,
\end{enumerate}
so we need to find $g$ for which (i) and (ii) are satisfied..

Fix infinite $A\in\algb$ and $n\in\omega$.  Consider $\Phi_n^A$ from
 Lemma \ref{lemma:A}; as this set contains finitely many minimal elements and $\nu$
  is almost strictly positive, we can choose
  $\varepsilon_n^A>0$ such that $\nu(A\setminus X_\varphi) \ge \varepsilon_n^A$
  for every $\varphi \in \Phi_n^A$. Note that the sequence of $\eps^A_n$ is decreasing.

  We inductively define  $h_A\in\omega^\omega$ so that
   \[ \nu\Big(S_n\big(h_A(n)\big)\Big) \leq \frac{\varepsilon_n^A}{2^{n+2}}.\]
  This can be done since $S_n(k)\in\alga_0$, $\nu|\alga_0=\mu_0$ and
    $\lim_{k\to\infty} \mu_0\big(S_n(k)\big)=0$ by our definition of an $s-$family.

  Let $g_0\in\omega^\omega$ be a function as in Lemma \ref{lemma:B}
  Since $|\algb|  < \be$, there exists a function $g_1\in\omega^\omega$ such that $g_1(n)\ge g_0(n)$
    for every $n$ and $h_A\le^* g_1$ for every $A\in\algb$;
    say that $h_a(n)\le g_1(n)$ for every $n\ge N_A$.
  \medskip

   {\sc Claim.} For every $g\ge g_1$, if  $A\in\algb$ and  $A\setminus X_g$ is
    infinite then $\nu^*(A\setminus X_g) \ge \frac{1}{2} \varepsilon_{N_A}^A$.
  \medskip

  Let $\vf$ be the restriction of $g$ to $N_A$; write $Y=\bigcup_{n=N_A}^\infty S_n(g(n))$.
  With this notation we have $A\sm X_g=(A\sm X_\vf)\sm Y$, where $A\sm X_\vf\in \algb$.

  Note first that $\nu^*(A\sm X_g)\ge \nu(A\sm X_\vf)-\nu_*(Y)$;
    indeed if $B\in\algb$, $B\supseteq A\sm X_g$ then $(A\sm X_\vf)\sm B\sub Y$ so
    \[\nu(A\sm X_\vf)-\nu(B)\le \nu((A\sm X_\vf)\sm B)\le \nu_*(Y).\]
  Further note that $\nu(A\sm X_\vf)\ge \eps_{N_A}^A$ so to verify Claim it is sufficient
    to check that $\nu_*(Y)\le \eps_{N_A}^A/2$.
  Since $g_1\geq g_0$, it follows from Lemma \ref{lemma:B} that whenever $B\in\algb$
    and $B\sub Y$ then $B$ is contained in  $\bigcup_{N_A\le n\le k} S_n(g(n))$ for some $k$.
  Therefore, using finite additivity of $\nu$, we get
  \[
    \nu(B) \le
    \sum_{n=N_A }^k \nu\Big(S_n\big(g(n)\big)\Big) \le
    \sum_{n=N_A}^k \frac{1}{2^{n+2}} \varepsilon_n^A \le
  \frac{1}{2^{N_A+1}}\varepsilon_{N_A}^A  \le \frac{1}{2}\varepsilon_{N_A}^A .
  \]
  This shows that indeed $\nu_*(Y)\le \eps_{N_A}^A/2$, and the proof of Claim is complete.
\medskip

Note that by an analogous argument we check that if $g\ge g_1$ then $\nu_*(X_g)\le 1/2$
  so $\omega\sm X_g$ must be infinite.

 We have proved that every $g\geq g_1$ guarantees \ref{lemma:C}(ii) so to complete the proof
  we need to find $g\geq g_1$ for which \ref{lemma:C}(i) holds.

 We again apply a diagonal argument using $|\algb|<\be$.
 Given $A\in\algb$, let $f_A(n)=g_1(n)$ if $|A\cap S_n(k)|=\omega$ for every $k$.
 Otherwise, if $A\cap S_n(k)$ is finite for some $k$ we can take $f_A(n)\ge f_1(n)$
  such that $A\cap S_n(f_A(n))=\emptyset$ (recall that $\bigcap_k S_n(k)=\emptyset$).
 Finally, let $g\in\omega^\omega$ be a function that eventually dominates every $f_A$, $A\in\algb$.
 If $A\in\algb$ and $\nu_*(A\cap X_g)=0$ then $A\cap S_n(g(n))$ is finite for every $n$.
 Taking $N$ such that $g(n)\ge f_A(n)$ for every $n\ge N$, and writing $\vf$ for the restriction
  of $g$ to $N$ we get  $A\sub X_\vf$ so $A$ is finite itself.
 Now $g$ is so that both \ref{lemma:C}(i)-(ii) are satisfied, and the proof is complete.
 \end{proof}

\end{document}